\documentclass[11pt]{article}
\usepackage[margin=1in]{geometry}
\usepackage{amsmath,amssymb,amsthm}
\usepackage{booktabs}
\usepackage{array}
\usepackage{enumitem}
\usepackage[hidelinks]{hyperref}
\usepackage{microtype}
\usepackage{xurl}
\usepackage{listings}
\newtheorem{theorem}{Theorem}
\newtheorem{proposition}[theorem]{Proposition}
\newtheorem{lemma}[theorem]{Lemma}
\newtheorem{corollary}[theorem]{Corollary}
\theoremstyle{definition}

\theoremstyle{remark}

\title{A 50-Vertex Cubic Counterexample to the\\Domination-versus-Edge-Domination Conjecture}
\author{Koyar Afrasyab}
\date{September 2026}

\begin{document}
\maketitle

\begin{abstract}
Baste, F\"urst, Henning, Mohr, and Rautenbach conjectured that every finite regular graph of positive degree satisfies
\(\gamma(G)\leq \gamma_e(G)\), where \(\gamma\) is the domination number and \(\gamma_e\) is the edge domination number, equivalently the minimum cardinality of a maximal matching.  We show that the conjecture is false already for cubic graphs.  The counterexample is a previously public 50-vertex cubic graph that had been used to refute the stronger independent-domination inequality \(i(G)\leq\gamma_e(G)\).  For this graph we prove
\[
\gamma(G)=16>15=\gamma_e(G).
\]
The equality \(\gamma_e(G)=15\) has a short counting proof, and a dominating set of order 16 is displayed explicitly.  For the lower bound \(\gamma(G)\geq16\), we give a self-contained exact reduction: after fixing which of the 20 clause vertices lie in a putative dominating set, the remaining problem is a finite set-cover problem on the 30 literal vertices.  We enumerate all \(2^{20}=1,048,576\) clause subsets, derive two explicit lower bounds, and solve exactly the 5,931 residual cases by a recurrence stated in the paper.  The complete case counts and minima are displayed, and a short standard-library Python implementation is included in an appendix.  A separate 893,049-node proof-tree certificate and a direct graph search provide independent verification.  Thus the regular-graph conjecture is disproved.  Combined with Gupta's recent theorem that every cubic graph on at most 48 vertices satisfies the conjectured inequality, the example is order-minimal among cubic counterexamples.
\end{abstract}

\section{Introduction}
All graphs in this paper are finite, simple, and undirected.  A dominating set of a graph $G$ is a set $D\subseteq V(G)$ such that every vertex belongs to $D$ or has a neighbor in $D$; its minimum size is the domination number $\gamma(G)$.  We use standard domination terminology; see, for example, Haynes, Hedetniemi, and Slater \cite{Haynes1998}.  The edge domination number $\gamma_e(G)$ is the minimum size of an edge-dominating set and, equivalently, the minimum cardinality of a maximal matching; edge domination and minimum maximal matchings have been studied since the work of Yannakakis and Gavril \cite{YannakakisGavril1980}.

The two parameters in the conjecture below belong to well-developed but rather different strands of cubic-graph extremal theory.  Reed proved the classical bound $\gamma(G)\le 3|V(G)|/8$ for graphs of minimum degree at least three \cite{Reed1996}; for connected cubic graphs, Kostochka and Stodolsky later improved the general upper bound to $4|V(G)|/11$ for order greater than eight \cite{KostochkaStodolsky2009}.  On the edge side, sharp bounds and approximation results for minimum maximal matchings in regular graphs were developed by Baste et al. \cite{BasteRegular2021}, and the cubic case was sharpened by Cames van Batenburg \cite{Cames2022}.

Baste, F\"urst, Henning, Mohr, and Rautenbach proposed the following conjecture in 2019, subsequently published in 2020 \cite{Baste2020}.

\begin{quote}
\textbf{Domination-versus-edge-domination conjecture.}
If \(G\) is a \(\Delta\)-regular graph with \(\Delta\geq1\), then
\[
\gamma(G)\leq\gamma_e(G).
\]
\end{quote}

Recent work of Gupta \cite{Gupta2026} proves the conjecture for all regular graphs of degree at least seven and, in the cubic case, for every graph on at most 48 vertices.  Gupta also exhibits a 50-vertex cubic obstruction to a natural transversal method, but that graph still satisfies the desired inequality, in fact \(\gamma=14<15=\gamma_e\).  Thus the cubic case remained open in that work.

Independent domination is a classical strengthening of domination: Allan and Laskar studied the relation between $\gamma(G)$ and the independent domination number $i(G)$ \cite{AllanLaskar1978}, and cubic independent domination has subsequently received dedicated study \cite{DorbecEtAl2015}.  A stronger TxGraffiti conjecture asked for $i(G)\leq\gamma_e(G)$.  A public certificate-backed release in 2026 gave a connected cubic graph on 50 vertices with $i(G)=16$ and $\gamma_e(G)=15$ \cite{TxGraffiti2026}.  Since $\gamma(G)\leq i(G)$, that result by itself did not decide the weaker conjecture of Baste et al.  The point of the present note is that the same labeled graph also has ordinary domination number 16.

\begin{theorem}\label{thm:main}
There exists a connected cubic graph \(G\) on 50 vertices such that
\[
\boxed{\gamma(G)=16>15=\gamma_e(G).}
\]
Consequently the domination-versus-edge-domination conjecture is false, already for \(\Delta=3\).
\end{theorem}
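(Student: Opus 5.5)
We exhibit an explicit graph and verify the two numbers separately. Take \(G\) to be the 50-vertex cubic graph from the TxGraffiti release \cite{TxGraffiti2026}. Its structure, as the abstract indicates, is 20 ``clause'' vertices and 30 ``literal'' vertices. We would first record its adjacency list and check that it is cubic and connected; both checks are routine. We then establish \(\gamma_e(G)=15\), the upper bound \(\gamma(G)\le 16\), and the lower bound \(\gamma(G)\ge 16\), in that order.

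For \(\gamma_e(G)\le 15\) we display a maximal matching \(M\) with 15 edges and check that the 20 unmatched vertices form an independent set, so that \(M\) cannot be extended. For \(\gamma_e(G)\ge 15\) we use counting. Let \(M\) be any maximal matching, and let \(U\) be its set of unmatched vertices. Then \(U\) is independent, so the \(3|U|\) edges leaving \(U\) all end in \(V(M)\). Each matched vertex receives at most 2 such edges, since one of its three edges lies in \(M\). Hence \(3(50-2|M|)\le 2\cdot 2|M|\), which gives \(|M|\ge 150/10=15\). This bound is tight, so this step closes immediately. For \(\gamma(G)\le 16\) we display a dominating set of 16 vertices; checking that it dominates \(G\) is mechanical.

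The main obstacle is the lower bound \(\gamma(G)\ge 16\). The plain counting bound \(\gamma\ge 50/4\) only gives 13, so an exhaustive but structured argument is needed. Suppose \(D\) is a dominating set with \(|D|\le 15\), and fix \(C=D\cap(\text{clause vertices})\); there are \(2^{20}\) choices. Given \(C\), the remaining task is to choose literal vertices \(L\) with \(|L|\le 15-|C|\) that dominate every vertex not already dominated by \(C\). This is a set-cover instance on the 30 literal vertices. For each \(C\) we would apply cheap lower bounds to prune the case. The first is a degree-counting bound: each literal vertex dominates at most 4 vertices, so at least \(\lceil u/4\rceil\) literals are needed, where \(u\) is the number of undominated vertices. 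The second is a packing bound from a greedy choice of undominated vertices with pairwise disjoint closed neighbourhoods among the literals. Any \(C\) where one of these bounds already exceeds \(15-|C|\) is discarded. The surviving cases, which we expect to number in the low thousands, are solved exactly by a branching recurrence: pick an undominated vertex \(v\) and branch over the at most 4 literal vertices in \(N[v]\). Every branch must be shown to need more than \(15-|C|\) literals.

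We would present the case counts in a table together with a short Python implementation. Since a computer-assisted claim needs a check that does not depend on the same code, we would add an independent verification, such as a direct branch-and-bound or ILP search on \(G\) itself. If symmetry of \(G\) is available, its automorphism group could be used to shrink the \(2^{20}\) enumeration. The final statement then follows: \(\gamma(G)=16>15=\gamma_e(G)\).
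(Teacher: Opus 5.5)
Your proposal is correct and follows essentially the same route as the paper: an explicit 15-edge maximal matching plus a counting bound for $\gamma_e(G)=15$, an explicit 16-vertex dominating set, and a $2^{20}$-case enumeration of the clause part $T=D\cap C$. That enumeration reduces each case to a set-cover problem on the literal vertices, prunes by lower bounds, solves the residual cases with an exact branching recurrence, and is backed by an independent direct search. The differences are only in detail: the paper bounds $\gamma_e$ by noting that an edge of a cubic graph dominates at most five of the 75 edges (your unmatched-vertex count is equally valid), and it prunes with the exact edge-cover number $|U|-\nu(H[U])$ of the unselected clauses together with the number $q(T)$ of variable pairs not fully dominated by $T$, leaving 5,931 residual cases. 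Your $\lceil u/4\rceil$ and greedy packing bounds are also valid lower bounds and only change how many cases reach the exact recurrence.
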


The graph itself is not claimed as new: it is the public 50-vertex formula-incidence graph of \cite{TxGraffiti2026}.  The new theorem is the determination of its \emph{ordinary} domination number.  The decisive finite calculation is presented explicitly in Sections~\ref{sec:finite}--\ref{sec:table}: the reduction, exact recurrence, and complete case summary all appear in the paper, and Appendix~\ref{app:code} gives reproducing code.  The ancillary package contains the same graph together with two independent verification routes, including a proof-tree certificate.

\section{The graph}\label{sec:graph}
For \(1\leq j\leq15\), create two literal vertices \(v_j^-\) and \(v_j^+\), joined by a pair edge \(v_j^-v_j^+\).  Create clause vertices \(c_1,\ldots,c_{20}\).  For each clause below, join \(c_a\) to the three literal vertices occurring in \(C_a\), with a positive occurrence \(x_j\) represented by \(v_j^+\) and a negative occurrence \(\bar x_j\) represented by \(v_j^-\):

\begin{align*}
C_1&=(x_1,x_9,\bar x_{11}), & C_2&=(\bar x_4,\bar x_{10},\bar x_{13}),\\
C_3&=(x_1,\bar x_9,\bar x_{14}), & C_4&=(x_2,x_6,x_{14}),\\
C_5&=(\bar x_5,\bar x_6,x_{15}), & C_6&=(x_2,\bar x_3,x_{15}),\\
C_7&=(x_3,x_5,\bar x_9), & C_8&=(\bar x_6,\bar x_{12},\bar x_{15}),\\
C_9&=(\bar x_1,x_5,\bar x_{11}), & C_{10}&=(x_4,\bar x_7,\bar x_{15}),\\
C_{11}&=(\bar x_2,x_8,\bar x_{12}), & C_{12}&=(x_3,x_9,x_{11}),\\
C_{13}&=(\bar x_4,\bar x_8,x_{10}), & C_{14}&=(x_7,\bar x_8,x_{13}),\\
C_{15}&=(x_4,x_7,\bar x_{13}), & C_{16}&=(\bar x_2,\bar x_7,x_{14}),\\
C_{17}&=(\bar x_3,x_{11},\bar x_{14}), & C_{18}&=(x_8,x_{10},x_{12}),\\
C_{19}&=(\bar x_{10},x_{12},x_{13}), & C_{20}&=(\bar x_1,\bar x_5,x_6).
\end{align*}

Every signed literal occurs exactly twice.  Therefore every literal vertex has one pair neighbor and two clause neighbors, while every clause vertex has degree three.  Hence \(G\) is cubic, with 50 vertices and 75 edges.

For a quick connectivity proof, contract the 15 pair edges.  Starting with \(C_1\), the sequence
\[
C_1,C_3,C_{12},C_7,C_{20},C_4,C_5,C_8,C_{11},C_{18},C_{19},C_2,C_{15}
\]
successively places all 15 variable vertices in one component: after the first clause, every subsequent clause in the sequence shares a previously reached variable and introduces any new variables it contains.  Every remaining clause meets one of those variables.  The contracted graph is therefore connected, and so is \(G\).

The machine labeling used in the ancillary files is
\[
v_j^-=2(j-1),\qquad v_j^+=2(j-1)+1,\qquad c_a=29+a.
\]
The complete edge list is supplied as \texttt{counterexample.edgelist}; \texttt{counterexample.json} also records the clauses, matching, and dominating witness.

\section{The edge domination number}\label{sec:edge}
\begin{proposition}\label{prop:edge}
For the graph \(G\) of Section~\ref{sec:graph}, \(\gamma_e(G)=15\).
\end{proposition}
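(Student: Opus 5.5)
Both inequalities are needed: a maximal matching of size $15$ gives $\gamma_e(G)\le 15$, and a counting argument over the $50$ vertices gives $\gamma_e(G)\ge 15$. We use the characterization of $\gamma_e(G)$ as the minimum cardinality of a maximal matching.

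For the upper bound, take the $15$ pair edges $M_0=\{v_j^-v_j^+ : 1\le j\le 15\}$. These are pairwise disjoint, so $M_0$ is a matching; it covers all $30$ literal vertices. Any edge of $G$ not in $M_0$ joins a clause vertex to a literal vertex, because the clause vertices form an independent set and the only literal–literal edges are pair edges. Each such edge therefore has a covered endpoint, so $M_0$ is maximal and $\gamma_e(G)\le 15$. This matching is presumably the one recorded in \texttt{counterexample.json}.

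For the lower bound, let $M$ be any maximal matching with $|M|=k$. It covers $2k$ vertices. By maximality, the set $U$ of uncovered vertices is independent, and every neighbor of an uncovered vertex is covered. Since $G$ is cubic, each uncovered vertex sends exactly $3$ edges into the covered set. Each covered vertex has one edge inside $M$, so it receives at most $2$ edges from $U$. Double counting the edges between $U$ and the covered set gives
\[
3(50-2k)\le 2\cdot 2k,
\]
so $150\le 10k$ and $k\ge 15$. Together with the upper bound this yields $\gamma_e(G)=15$.

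No step is a real obstacle. The point to state carefully is the maximality fact that uncovered vertices have only covered neighbors and that each covered vertex spends one incidence on its matching edge. The general bound $\gamma_e\ge \lceil 3n/10\rceil$ for cubic graphs is exactly attained here, so no structural information beyond cubicity is needed.
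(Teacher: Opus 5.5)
Your proof is correct, and your upper bound is exactly the paper's. For the lower bound you use a different double count.

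The paper works directly with an arbitrary edge-dominating set. In a cubic graph an edge dominates at most $5$ edges (itself and two others at each endpoint), so at least $75/5=15$ edges are needed. You instead pass to a maximal matching of size $k$. You count the $3(50-2k)$ edges leaving the independent set of unsaturated vertices, at most $2$ per saturated vertex. Since $m=3n/2$ in a cubic graph, the inequalities $75\le 5k$ and $3(50-2k)\le 4k$ are the same bound $3n\le 10k$.

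The paper's version has a small advantage: it does not need the nontrivial direction of the equivalence between minimum edge-dominating sets and minimum maximal matchings. Your count does not really need it either. It adapts at once to an edge-dominating set $F$ covering $c\le 2|F|$ vertices, giving $3(50-c)\le 2c$ and hence $|F|\ge 15$.

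Your version, in turn, makes the equality case transparent. Having $k=3n/10$ forces each saturated vertex to send both of its non-matching edges into the unsaturated set, so the saturated vertices induce exactly the matching. That is precisely the pair-edge/clause structure of $G$.
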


\begin{proof}
Let
\[
M=\{v_j^-v_j^+:1\leq j\leq15\}.
\]
The 20 vertices not saturated by \(M\) are precisely the clause vertices, and no two clause vertices are adjacent.  Thus \(M\) is maximal, so \(\gamma_e(G)\leq15\).

Conversely, in a cubic graph an edge is adjacent to at most four other edges, two at each endpoint.  Hence a single edge can dominate at most five edges including itself.  Since \(G\) has 75 edges, every edge-dominating set has size at least \(75/5=15\).  Therefore \(\gamma_e(G)=15\).
\end{proof}

\section{A dominating set of order sixteen}\label{sec:upper}
In the numeric labeling above, set
\[
D_0=\{0,2,5,7,8,13,15,30,31,32,33,35,37,42,46,48\}.
\]
Equivalently,
\[
\begin{split}
D_0=\{&v_1^-,v_2^-,v_3^+,v_4^+,v_5^-,v_7^+,v_8^+,\\
&c_1,c_2,c_3,c_4,c_6,c_8,c_{13},c_{17},c_{19}\}.
\end{split}
\]
Direct inspection of the displayed clauses shows that every vertex belongs to \(D_0\) or has a neighbor in \(D_0\).  In fact \(D_0\) is independent, as already certified for the stronger invariant in \cite{TxGraffiti2026}.  Thus
\[
\gamma(G)\leq16.
\]
The ancillary script \texttt{verify\_structure.py} checks this witness directly from the raw edge list.

\section{Exact finite lower bound}\label{sec:finite}
We now prove \(\gamma(G)\geq16\) by a finite calculation whose complete reduction and case summary are included here.
Let
\[
C=\{c_1,\ldots,c_{20}\},\qquad L=\{v_j^-,v_j^+:1\leq j\leq15\}.
\]
For any dominating set \(D\), write
\[
T=D\cap C,\qquad S=D\cap L,
\]
so that \(D=T\cup S\).  We fix \(T\) and determine exactly the least possible cardinality of \(S\).

\subsection{Clause graph and the exact conditions}
Construct a graph \(H\) on the 20 clause vertices by replacing each signed literal by an edge joining the two clauses in which that signed literal occurs.  Since every signed literal occurs exactly twice and every clause contains three literals, \(H\) is a cubic graph with 20 vertices and 30 edges.  The 30 edges of \(H\) are in one-to-one correspondence with the 30 literal vertices of \(G\).

Put \(U=C\setminus T\).  For a variable \(x_j\), call \(j\) \emph{unsafe for \(T\)} if \(T\) does not contain a clause incident with \(v_j^-\), or does not contain a clause incident with \(v_j^+\).  Let \(Q(T)\) be the set of unsafe variables and write \(q(T)=|Q(T)|\).

\begin{lemma}\label{lem:fixedT}
For fixed \(T\subseteq C\), a set \(S\subseteq L\) makes \(T\cup S\) a dominating set of \(G\) if and only if
\begin{enumerate}[label=(\roman*)]
\item every clause vertex in \(U=C\setminus T\) is incident with a literal vertex in \(S\);
\item for every \(j\in Q(T)\), at least one of \(v_j^-\) and \(v_j^+\) belongs to \(S\).
\end{enumerate}
\end{lemma}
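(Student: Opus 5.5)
We prove the lemma by checking, one vertex at a time, when a vertex of $G$ is dominated by $T\cup S$, using the fact that $G$ is bipartite-like between $C$ and $L$ apart from the pair edges. Clause vertices are pairwise non-adjacent, and each clause vertex $c_a$ has exactly three neighbours, namely the literal vertices of $C_a$. Each literal vertex $v_j^{\pm}$ has exactly three neighbours: its partner $v_j^{\mp}$ and the two clause vertices containing that signed literal. So the domination condition for $D=T\cup S$ splits into one family of conditions for the vertices of $C$ and one for the vertices of $L$.

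\emph{Clause vertices.} A vertex of $T$ is dominated by itself. Since no clause vertex is adjacent to another, a vertex $c\in U$ is dominated exactly when one of its three literal neighbours lies in $S$. This is condition (i).

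\emph{Literal vertices.} Fix $j$. If both $v_j^-$ and $v_j^+$ have a clause neighbour in $T$, they are dominated whatever $S$ is. Otherwise $j\in Q(T)$. Say $v_j^{\epsilon}$ has no clause neighbour in $T$. Then its only remaining possible dominators are $v_j^{\epsilon}$ itself and its partner $v_j^{-\epsilon}$. So it is dominated if and only if $S\cap\{v_j^-,v_j^+\}\neq\emptyset$.

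\emph{Both directions.} If $T\cup S$ is dominating, then every literal vertex lacking a $T$-neighbour is dominated, which forces (ii) for each $j\in Q(T)$. Conversely, suppose (ii) holds and $j\in Q(T)$. A vertex of the pair that has a $T$-neighbour is dominated by that neighbour. A vertex of the pair with no $T$-neighbour is dominated by whichever of $v_j^-,v_j^+$ lies in $S$: either it is in $S$ itself, or its partner is in $S$ and adjacent to it. Together with (i), this shows that (i) and (ii) hold exactly when every vertex of $G$ is dominated.

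The only point needing care is the converse step for $j\in Q(T)$. Condition (ii) is symmetric in $v_j^-$ and $v_j^+$, even when only one of the two lacks a $T$-neighbour. It is still sufficient because the other vertex of the pair is already dominated, and one pair vertex in $S$ dominates both pair vertices. Everything else is direct bookkeeping of the neighbourhoods described in Section~\ref{sec:graph}.
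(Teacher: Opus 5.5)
Your proof is correct and is essentially the paper's argument. Clause vertices in $U$ can be dominated only by literal neighbours in $S$, which gives (i), and a variable pair is dominated either by a selected pair vertex via the pair edge or, when neither is selected, only if $T$ meets both polarities, i.e.\ $j\notin Q(T)$, which gives (ii); you just spell out the two directions more explicitly than the paper does.
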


\begin{proof}
Condition (i) is exactly the domination requirement for clause vertices not already selected in \(T\), because clause vertices have neighbors only among literal vertices.  For a variable pair \(v_j^-v_j^+\), if either endpoint is selected then both endpoints are dominated by the pair edge.  If neither endpoint is selected, then each endpoint must instead have a selected incident clause.  This is possible exactly when \(T\) meets at least one occurrence of each polarity, i.e. exactly when \(j\notin Q(T)\).  This proves the equivalence.
\end{proof}

Thus, for each fixed \(T\), the remaining task is an ordinary finite set-cover problem: the requirements are the unselected clauses \(U\) together with the unsafe variables \(Q(T)\), and each literal vertex covers the clauses in \(U\) in which it occurs and, when applicable, its own unsafe variable.

\subsection{Two exact lower bounds}
Let \(\nu(H[U])\) denote the maximum matching size in the subgraph of \(H\) induced by \(U\).

\begin{lemma}\label{lem:edgecover}
The minimum number of literal vertices needed merely to dominate every clause in \(U\) is
\[
|U|-\nu(H[U]).
\]
\end{lemma}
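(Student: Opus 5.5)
The plan is to recognize the quantity in question as a minimum edge cover problem and then invoke Gallai's classical identity. Each literal vertex of \(G\) corresponds to an edge of \(H\), joining the two clauses in which that signed literal occurs. A literal vertex dominates a clause vertex \(c\in U\) exactly when the corresponding edge of \(H\) is incident with \(c\). So a set \(S\subseteq L\) dominates every clause in \(U\) if and only if the corresponding edge set \(F\subseteq E(H)\) covers every vertex of \(U\). The edges in \(F\) may have one endpoint outside \(U\), that is, in \(T\). The quantity to compute is therefore the minimum size of a set of edges of \(H\) covering \(U\), where edges may leave \(U\).

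For the upper bound I would take a maximum matching \(N\) of \(H[U]\), so \(|N|=\nu(H[U])\). It covers \(2\nu(H[U])\) vertices of \(U\). For each of the remaining \(|U|-2\nu(H[U])\) vertices of \(U\), I would add one edge of \(H\) incident with it; such an edge exists because \(H\) is cubic. The result covers \(U\) with \(|U|-\nu(H[U])\) edges.

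For the lower bound I would take a minimum covering set \(F\) and use the standard Gallai argument. Minimality forces each edge of \(F\) to have an endpoint in \(U\) covered only by that edge, since otherwise the edge could be deleted. It follows that \(F\) is a disjoint union of stars, so \(F\) contains no path of length three and no triangle. Each edge of \(F\) with an endpoint in \(T\) lies in a star whose relevant leaves are in \(U\).

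The counting then goes as follows.
\begin{itemize}
\item Let \(F_1\) be the set of edges of \(F\) with both endpoints in \(U\).
\item In each star component, at most one edge of \(F_1\) can be kept pairwise disjoint from the others. Choosing one \(F_1\)-edge from each component that contains one gives a matching \(N\) in \(H[U]\).
\item A component of \(F\) with \(k\) edges covers at most \(k+1\) vertices of \(U\). It covers exactly \(k+1\) only when its center is in \(U\) and all its leaves are in \(U\), and in that case it contributes an edge to \(N\). Otherwise it covers at most \(k\) vertices of \(U\).
\end{itemize}
Summing over components gives \(|U|\le |F|+|N|\le |F|+\nu(H[U])\), which is the desired lower bound.

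The main obstacle is the treatment of edges leaving \(U\), which the textbook Gallai theorem does not cover. The step that needs care is the component count: I would have to check that a star whose center lies in \(T\) covers exactly \(k\) vertices of \(U\), and that a star whose center is in \(U\) but which has some leaves in \(T\) covers at most \(k\) vertices of \(U\). Both facts come from minimality, because every leaf edge must be needed for its \(U\)-endpoint. Parallel edges in \(H\), which arise when two clauses share two literals, do not affect either argument.
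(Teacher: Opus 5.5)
Your proof is correct, and the upper bound is the same as the paper's: a maximum matching of $H[U]$ plus one incident edge for each unmatched vertex of $U$.

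The lower bounds differ in a way worth noting. The paper counts the $p$ selected edges with both ends in $U$ and writes $|U|\le r+p$. It then asserts that after deleting redundancies these $p$ edges are pairwise disjoint, so $p\le\nu(H[U])$. That step does not hold as written. Take distinct $H$-neighbours $a,b$ of a clause $u$ and $U=\{u,a,b\}$. Then $\{ua,ub\}$ is a minimal (indeed minimum) cover with nothing to delete, yet $p=2>1=\nu(H[U])$.

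Your route avoids this. You decompose a minimal cover into stars and take only one matching edge from each star lying entirely inside $U$. That is exactly the correct Gallai-type count, so your argument is the rigorous version of the paper's sketch.

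Two simplifications are available. First, the step you flagged as the main obstacle needs no minimality. A star with $k$ edges has $k+1$ vertices, so if one of them lies in $T$ then at most $k$ lie in $U$.

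Second, you can drop minimality altogether. Any connected component of an arbitrary cover, with $k$ edges, has at most $k+1$ vertices, with equality only for a tree. So it meets $U$ in at most $k$ vertices unless it is a tree inside $U$. Choosing one edge from each such tree gives a matching of $H[U]$, whence $|U|\le r+\nu(H[U])$ for every cover, with no pruning or star structure needed.
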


\begin{proof}
A selected literal corresponds to an edge of \(H\) and can cover at most two vertices of \(U\); it covers two exactly when both endpoints of that edge lie in \(U\).  If \(r\) selected literal edges cover \(U\), and \(p\) of them cover two vertices of \(U\), then \(|U|\le r+p\).  The \(p\) doubly useful edges may be assumed pairwise disjoint after deleting redundancies, so \(p\le\nu(H[U])\).  Hence \(r\ge |U|-\nu(H[U])\).

Conversely, take a maximum matching \(M\) of \(H[U]\).  Select the corresponding \(|M|\) literal vertices.  For each vertex of \(U\) left unmatched by \(M\), select any incident literal edge.  This uses exactly
\(|M|+(|U|-2|M|)=|U|-|M|\) literals and covers all of \(U\).  Therefore the bound is exact.
\end{proof}

Condition (ii) of Lemma~\ref{lem:fixedT} gives independently
\[
|S|\ge q(T),
\]
because each selected literal belongs to exactly one variable pair.  Consequently every dominating set with clause part \(T\) has size at least
\begin{equation}\label{eq:LB}
L(T)=|T|+\max\bigl\{|U|-\nu(H[U]),\ q(T)\bigr\}.
\end{equation}
Whenever \(L(T)\ge16\), that choice of \(T\) cannot occur in a dominating set of size at most 15.

\subsection{Exact recurrence for the residual cases}\label{sec:recurrence}
It remains only to treat those \(T\) for which \(L(T)\le15\).  For fixed \(T\), let \(\mathcal R_T\) consist of one requirement for each clause in \(U\) and one requirement for each variable in \(Q(T)\).  For a literal vertex \(\ell\in L\), let \(A_T(\ell)\subseteq\mathcal R_T\) be the requirements satisfied by choosing \(\ell\).

For \(R\subseteq\mathcal R_T\), define \(F_T(R)\) recursively by
\begin{equation}\label{eq:recurrence}
F_T(\varnothing)=0,
\qquad
F_T(R)=1+\min_{\ell:\,r\in A_T(\ell)}F_T\bigl(R\setminus A_T(\ell)\bigr),
\end{equation}
where \(r\) is any fixed requirement in \(R\).  The value is independent of which requirement \(r\) is chosen for branching.

\begin{lemma}\label{lem:recurrence}
For every \(T\subseteq C\), \(F_T(\mathcal R_T)\) is exactly the minimum possible size of \(S\) such that \(T\cup S\) dominates \(G\).
\end{lemma}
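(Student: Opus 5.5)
By Lemma~\ref{lem:fixedT}, the minimum size of \(S\) such that \(T\cup S\) dominates \(G\) equals the minimum number of literal vertices \(\ell\in L\) whose sets \(A_T(\ell)\) together cover \(\mathcal R_T\). So the plan is to prove, for every \(R\subseteq\mathcal R_T\), that \(F_T(R)\) equals
\[
\mathrm{OPT}_T(R)=\min\Bigl\{|S| : S\subseteq L,\ \textstyle\bigcup_{\ell\in S}A_T(\ell)\supseteq R\Bigr\},
\]
and then specialize to \(R=\mathcal R_T\). The set of admissible \(S\) is nonempty, since \(S=L\) covers every requirement: every clause has a literal neighbour and every variable has two literals. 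Hence \(\mathrm{OPT}_T(R)\) is finite. The argument is a strong induction on \(|R|\). As part of the same argument, it shows that the recurrence is well defined: any \(\ell\) with \(r\in A_T(\ell)\) makes \(R\setminus A_T(\ell)\) strictly smaller than \(R\), so the recursion terminates, and such an \(\ell\) always exists.

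The base case is immediate: \(F_T(\varnothing)=0=\mathrm{OPT}_T(\varnothing)\).

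For the inductive step, fix \(R\neq\varnothing\) and the chosen requirement \(r\in R\).

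For the upper bound, take \(\ell\) with \(r\in A_T(\ell)\), and let \(S'\) be an optimal cover of \(R\setminus A_T(\ell)\). Then \(S'\cup\{\ell\}\) covers \(R\) and has size at most \(1+\mathrm{OPT}_T(R\setminus A_T(\ell))\). By induction this equals \(1+F_T(R\setminus A_T(\ell))\). Minimizing over \(\ell\) gives \(\mathrm{OPT}_T(R)\le F_T(R)\).

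For the lower bound, let \(S\) be an optimal cover of \(R\). Some \(\ell\in S\) satisfies \(r\in A_T(\ell)\). Then \(S\setminus\{\ell\}\) covers \(R\setminus A_T(\ell)\), so
\[
\mathrm{OPT}_T(R)-1\ \ge\ \mathrm{OPT}_T\bigl(R\setminus A_T(\ell)\bigr)=F_T\bigl(R\setminus A_T(\ell)\bigr),
\]
where the equality is the induction hypothesis. Therefore \(\mathrm{OPT}_T(R)\ge 1+\min_\ell F_T(\cdot)=F_T(R)\).

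Combining the two bounds gives \(F_T(R)=\mathrm{OPT}_T(R)\). The right-hand side does not depend on \(r\), which proves the claimed independence of the branching choice.

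The only real subtlety, and the step needing care, is this branching argument: every cover of \(R\) must contain some literal covering the fixed requirement \(r\). This holds simply because \(r\in R\) must be covered. Everything else is bookkeeping, including the fact that requirements outside \(R\) can be ignored because they lie outside the target set.
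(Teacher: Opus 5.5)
Your proof is correct and takes the same route as the paper: branch on a literal covering the fixed requirement $r$, then induct on $|R|$. You spell out what the paper's short proof leaves implicit, namely the two separate inequalities, termination of the recursion, existence of a covering literal, and independence from the choice of $r$.
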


\begin{proof}
Every feasible set \(S\) must contain some literal \(\ell\) satisfying the chosen requirement \(r\).  Once \(\ell\) is chosen, precisely the requirements in \(A_T(\ell)\) are discharged, leaving the same problem on \(R\setminus A_T(\ell)\).  Taking the minimum over all possible literals satisfying \(r\) is therefore exhaustive.  Induction on \(|R|\) proves the recurrence exactly computes the minimum set-cover cardinality.
\end{proof}

\section{Complete case calculation}\label{sec:table}
There are \(2^{20}=1,048,576\) possible sets \(T\).  For each \(T\), we first evaluate \eqref{eq:LB}.  Only when \(L(T)\le15\) do we evaluate the exact recurrence \eqref{eq:recurrence}.  There are only 5,931 such residual cases.  Table~\ref{tab:finite} gives the complete calculation grouped by \(t=|T|\).  The column ``minimum \(L\)'' is the minimum value of \eqref{eq:LB} among all \(\binom{20}{t}\) subsets of that size.  ``Residual cases'' counts the subsets with \(L(T)\le15\).  For those residual subsets, the final column is the minimum exact value of
\(|T|+F_T(\mathcal R_T)\).

\begin{table}[ht]
\centering
\small
\begin{tabular}{rrrrr}
\toprule
\(t\) & \(\binom{20}{t}\) & minimum \(L\) & residual cases & minimum exact total\\
\midrule
0&1&15&1&16\\
1&20&16&0&--\\
2&190&16&0&--\\
3&1,140&15&7&16\\
4&4,845&14&40&16\\
5&15,504&14&211&16\\
6&38,760&14&938&16\\
7&77,520&14&2,189&16\\
8&125,970&15&1,917&16\\
9&167,960&15&590&16\\
10&184,756&15&38&16\\
11&167,960&16&0&--\\
12&125,970&16&0&--\\
13&77,520&17&0&--\\
14&38,760&17&0&--\\
15&15,504&18&0&--\\
16&4,845&18&0&--\\
17&1,140&19&0&--\\
18&190&19&0&--\\
19&20&20&0&--\\
20&1&20&0&--\\
\bottomrule
\end{tabular}
\caption{Complete finite calculation for the lower bound \(\gamma(G)\ge16\).}
\label{tab:finite}
\end{table}

Every row of Table~\ref{tab:finite} has one of two outcomes: either the elementary bound already gives \(L(T)\ge16\) for every subset in the row, or every exceptional subset is solved exactly by Lemma~\ref{lem:recurrence} and still requires total size at least 16.  Hence no dominating set of size at most 15 exists.

\begin{proposition}\label{prop:gamma}
For the graph \(G\), \(\gamma(G)=16\).
\end{proposition}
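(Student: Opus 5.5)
The equality splits into the two inequalities, and each has essentially been prepared earlier in the paper. For the upper bound \(\gamma(G)\le 16\), I would simply invoke the explicit witness \(D_0\) of Section~\ref{sec:upper}. Checking that it dominates is a finite inspection of the twenty clauses. The seven literal vertices of \(D_0\) dominate their own variable pairs. The chosen clauses \(c_1,c_2,c_3,c_4,c_6,c_8,c_{13},c_{17},c_{19}\) dominate their incident literals. Each of the eleven unselected clauses must then be checked against the seven selected literals; for example \(c_5\) contains \(\bar x_5\), represented by \(v_5^-\in D_0\).

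For the lower bound \(\gamma(G)\ge16\), I would argue by contradiction. Suppose \(D\) is a dominating set with \(|D|\le15\), and put \(T=D\cap C\) and \(S=D\cap L\). By Lemma~\ref{lem:fixedT}, \(S\) satisfies conditions (i) and (ii) relative to \(T\). Therefore \(|S|\) is at least both \(|U|-\nu(H[U])\), by Lemma~\ref{lem:edgecover}, and \(q(T)\). Hence \(|D|\ge L(T)\) as in \eqref{eq:LB}, and so \(T\) must be one of the subsets with \(L(T)\le15\). For every such residual \(T\), Lemma~\ref{lem:recurrence} gives \(|S|\ge F_T(\mathcal R_T)\), so \(|D|\ge |T|+F_T(\mathcal R_T)\). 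Table~\ref{tab:finite} records that this quantity is at least \(16\) in every residual case, which is the required contradiction.

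The main obstacle is that the argument rests entirely on the correctness of the exhaustive computation behind Table~\ref{tab:finite}. That computation has three parts: evaluating \(\nu(H[U])\) and \(q(T)\) for all \(2^{20}\) subsets \(T\), and solving the \(5{,}931\) residual set-cover instances exactly. I would make this step trustworthy in two ways. First, I would state that the enumeration is the algorithm of Appendix~\ref{app:code}, so it transparently implements \eqref{eq:LB} and \eqref{eq:recurrence}. Second, I would point to the independent proof-tree certificate and the direct graph search as cross-checks. One cheap sanity check is the row \(t=0\), which can be confirmed by hand. There \(U=C\) and \(q=15\), and since \(H\) has a perfect matching, \(L(\varnothing)=15\). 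The exact recurrence must then show that no 15 literals, one from each pair, hit all 20 clauses. This is the statement that the 3-CNF formula is unsatisfiable, as the TxGraffiti construction intends.

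Combining the two bounds gives \(\gamma(G)=16\).
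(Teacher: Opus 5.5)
Your proposal is correct and follows the paper's proof. The upper bound comes from the witness $D_0$. The lower bound comes from the reduction of Lemma~\ref{lem:fixedT}, the elementary bound \eqref{eq:LB}, and the exact recurrence on the 5{,}931 residual cases summarized in Table~\ref{tab:finite}. When you inspect $D_0$, also check that both literal vertices of each of the eight pairs $6,9,10,\ldots,15$ not hit by a selected literal have a neighbor among the nine chosen clauses; they do.
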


\begin{proof}
Section~\ref{sec:upper} exhibits a dominating set of size 16.  The complete calculation in Table~\ref{tab:finite}, justified by Lemmas~\ref{lem:fixedT}--\ref{lem:recurrence}, excludes every dominating set of size at most 15.  Therefore \(\gamma(G)=16\).
\end{proof}

Together with Proposition~\ref{prop:edge}, Proposition~\ref{prop:gamma} proves Theorem~\ref{thm:main} and therefore disproves the domination-versus-edge-domination conjecture.

\section{Independent machine verification}\label{sec:verification}
The preceding proof is the primary argument and is self-contained up to the finite arithmetic reproduced by the appendix code.  The ancillary package supplies two independent checks of the crucial lower bound.

\subsection{Proof-tree certificate}
A generic graph search starts from a partial selected set \(S\).  If \(w\) is undominated, every completion must choose one of the four vertices of \(N[w]\).  Because \(G\) is cubic, if \(r\) vertices remain undominated then at least \(\lceil r/4\rceil\) further selections are necessary.  Repeatedly applying these two rules yields the supplied certificate \texttt{dom15.tree.gz}, with 893,049 nodes (223,262 branch nodes and 669,787 leaves) and maximum depth 15.  The checker recomputes every neighborhood and every terminal inequality from the raw edge list.  Its recorded output is
\begin{verbatim}
CERTIFICATE VERIFIED
graph: connected cubic, n=50, m=75
no dominating set of size <= 15
nodes=893049 branches=223262 leaves=669787 maxdepth=15
\end{verbatim}
This verification does not use the clause-set recurrence or Table~\ref{tab:finite}.

\subsection{Direct branch-and-bound search}
The script \texttt{direct\_domination\_search.py} parses only the raw edge list and performs a separate exact search on dominated-vertex masks.  It returns no solution for \(k=15\) and a solution for \(k=16\).  This search neither reads the proof tree nor uses the formula-specific decomposition above.

\section{Consequences and relation to recent work}
Theorem~\ref{thm:main} disproves the conjecture of Baste et al. in the smallest unresolved degree, \(\Delta=3\).  Gupta's Proposition 7 states that every cubic graph on at most 48 vertices satisfies \(\gamma\leq\gamma_e\) \cite{Gupta2026}.  Since every cubic graph has even order, Theorem~\ref{thm:main} and that result together imply the following.

\begin{corollary}\label{cor:minimal}
Using Gupta's order-48 theorem, 50 is the minimum possible order of a cubic counterexample to \(\gamma(G)\leq\gamma_e(G)\).
\end{corollary}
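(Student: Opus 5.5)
The corollary is a combination of two facts, and I would prove it by putting them together.

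First, I would show that 50 is attained. Theorem~\ref{thm:main} gives a connected cubic graph $G$ on 50 vertices with $\gamma(G)=16>15=\gamma_e(G)$. This is a cubic counterexample of order 50. The ingredients are Proposition~\ref{prop:edge} for $\gamma_e(G)=15$ and Proposition~\ref{prop:gamma} for $\gamma(G)=16$.

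Second, I would show that no smaller order is possible. Let $G'$ be any cubic graph with $\gamma(G')>\gamma_e(G')$. By the handshake lemma, $3|V(G')|=2|E(G')|$, so $|V(G')|$ is even. Gupta's Proposition~7 \cite{Gupta2026} states that every cubic graph on at most 48 vertices satisfies $\gamma\le\gamma_e$. So a counterexample cannot have order at most 48. Being even, its order is therefore at least 50. Together with the first step, this shows that the minimum order is exactly 50.

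One point needs care. Gupta's result should cover all cubic graphs of those orders, not only connected ones. If it covered only connected graphs, I would reduce to that case. Both $\gamma$ and $\gamma_e$ are additive over connected components, and every component of a cubic graph is cubic with fewer or equally many vertices. Hence a disconnected counterexample has some component $K$ with $\gamma(K)>\gamma_e(K)$, and $K$ is a connected counterexample of no larger order. The minimum order can therefore be taken over connected graphs, and our example is connected anyway.

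There is no real obstacle here. The substantive content is in Proposition~\ref{prop:gamma} and in the cited theorem. The only things to check are the parity argument and that the hypotheses of the cited result match the class of graphs we consider.
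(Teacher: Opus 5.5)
Your proposal is correct and is the paper's argument: Theorem~\ref{thm:main} supplies a counterexample of order 50, and Gupta's Proposition~7, together with the fact that cubic graphs have even order, excludes every smaller order. Your extra reduction to connected graphs via additivity of $\gamma$ and $\gamma_e$ over components is valid but not needed, since the paper takes Gupta's result to cover all cubic graphs on at most 48 vertices.
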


It is worth distinguishing two 50-vertex graphs.  Gupta's Theorem 2 constructs a cubic graph whose unique minimum maximal matching has no dominating transversal but whose domination number is 14, so it is not a counterexample.  The graph studied here comes from the separate TxGraffiti independent-domination release \cite{TxGraffiti2026}; its ordinary domination number was not part of that release.  The differing domination numbers also show immediately that the two graphs are nonisomorphic.

The result illustrates a small but important gap between ordinary and independent domination.  Knowing \(i(G)=16\) only gives \(\gamma(G)\leq16\), and in general the inequality can be strict.  Here the exact finite calculation shows that no non-independent choice of 15 vertices improves on the independent optimum.

\section{Reproducibility and evidence boundary}
The appendix contains a complete standard-library implementation of the finite calculation used in the primary proof.  The ancillary package additionally permits the theorem to be checked without an optimization library or SAT solver.  Its core command is
\begin{verbatim}
./run_core_verification.sh
\end{verbatim}
which verifies the construction and witness, reruns the formula-specific calculation, regenerates and checks the independent proof tree, and runs the independent direct graph search.

The theorem is a finite, machine-checkable statement.  The package does not claim conventional peer review, formal proof-assistant verification, or priority beyond the literature search described here.  In particular, the graph construction is explicitly attributed to its earlier public release.  The present mathematical claim is the exact ordinary domination computation and its consequence for the Baste--F\"urst--Henning--Mohr--Rautenbach conjecture.

\paragraph{Files.}
The finite calculation printed in the paper is also supplied as \path{paper_audit.py} and \path{formula_audit.py}.  The files \path{dom15.tree.gz} and \path{check_dom15_certificate.py} provide the independent proof-tree verification; \path{direct_domination_search.py} provides the second graph-only search.  \path{SHA256SUMS} records the exact release digests and \path{REPLAY_RECEIPT.txt} records a clean local replay.

\section*{Acknowledgment of graph provenance}
The 50-vertex labeled graph used in this note was previously released in the certificate-backed work on the stronger TxGraffiti independent-domination conjecture \cite{TxGraffiti2026}.  This paper does not claim discovery of that graph.  Its contribution is the determination \(\gamma(G)=16\) and the resulting counterexample to the ordinary domination-versus-edge-domination conjecture.

\section*{Acknowledgment of AI assistance}
OpenAI ChatGPT (GPT-5.6 Sol) assisted with the ordinary-domination investigation, generation and cross-checking of verification code, literature search, and drafting of the manuscript and reproducibility materials.  The AI system is not an author.  All theorem-critical claims are reduced in the paper to explicit finite calculations, with reproducing code and independent verification programs provided for checking.

\appendix
\section{Reproducing the finite calculation}\label{app:code}
The following standard-library Python program is a direct implementation of Sections~\ref{sec:finite}--\ref{sec:table}.  It reconstructs the clause graph from the 20 displayed clauses, computes all induced-subgraph matching numbers by dynamic programming, enumerates all \(2^{20}\) choices of \(T\), evaluates \(L(T)\), and applies recurrence~\eqref{eq:recurrence} exactly to the residual cases.  Its output is Table~\ref{tab:finite}, followed by the line \texttt{VERIFIED: every dominating set has size at least 16}.

\begin{lstlisting}[language=Python]
#!/usr/bin/env python3
"""Exact finite audit printed in the paper: proves gamma(G) >= 16."""
from array import array
from functools import lru_cache
from math import comb

clauses = [
 [1,9,-11],[-4,-10,-13],[1,-9,-14],[2,6,14],[-5,-6,15],
 [2,-3,15],[3,5,-9],[-6,-12,-15],[-1,5,-11],[4,-7,-15],
 [-2,8,-12],[3,9,11],[-4,-8,10],[7,-8,13],[4,7,-13],
 [-2,-7,14],[-3,11,-14],[8,10,12],[-10,12,13],[-1,-5,6]
]

# Literal 2j is negative x_{j+1}; literal 2j+1 is positive.
occ = [[] for _ in range(30)]
for a,C in enumerate(clauses):
    for z in C:
        j=abs(z)-1; e=2*j+(z>0); occ[e].append(a)
assert all(len(x)==2 for x in occ)

# H is the 20-vertex clause graph: each signed literal is one edge.
ends=[]; nbr=[0]*20
for a,b in occ:
    ends.append((1<<a)|(1<<b)); nbr[a]|=1<<b; nbr[b]|=1<<a
ALL=(1<<20)-1

# nu[U] = maximum matching size in H[U].
nu=array('B',[0])*(1<<20)
for U in range(1,1<<20):
    bv=U&-U; v=bv.bit_length()-1; R=U^bv
    best=nu[R]; X=nbr[v]&R
    while X:
        bu=X&-X; X^=bu
        best=max(best,1+nu[R^bu])
    nu[U]=best

def unsafe(T):
    return [j for j in range(15)
            if not(T&ends[2*j]) or not(T&ends[2*j+1])]

def tau(T):
    """Exact minimum number of selected literal vertices for fixed T."""
    U=ALL^T; req=[]
    for a in range(20):
        if U>>a&1: req.append(tuple(e for e,ab in enumerate(occ) if a in ab))
    for j in unsafe(T): req.append((2*j,2*j+1))
    if not req: return 0
    cover=[0]*30
    for r,C in enumerate(req):
        for e in C: cover[e]|=1<<r
    @lru_cache(None)
    def F(R):
        if not R: return 0
        rr=min((r for r in range(len(req)) if R>>r&1), key=lambda r:len(req[r]))
        return 1+min(F(R&~cover[e]) for e in req[rr])
    return F((1<<len(req))-1)

stats=[{'N':0,'minL':99,'res':0,'minexact':99} for _ in range(21)]
for T in range(1<<20):
    t=T.bit_count(); U=ALL^T; q=len(unsafe(T))
    L=t+max(U.bit_count()-nu[U],q)
    s=stats[t]; s['N']+=1; s['minL']=min(s['minL'],L)
    if L<=15:
        s['res']+=1
        s['minexact']=min(s['minexact'],t+tau(T))

print(' t   C(20,t)   min L   residual   min exact total')
for t,s in enumerate(stats):
    ex='-' if s['res']==0 else str(s['minexact'])
    print(f'{t:2d} {s["N"]:9d} {s["minL"]:7d} {s["res"]:10d} {ex:>17}')
assert sum(s['N'] for s in stats)==1<<20
assert sum(s['res'] for s in stats)==5931
assert all(s['minL']>=16 or s['minexact']>=16 for s in stats)
print('VERIFIED: every dominating set has size at least 16')
\end{lstlisting}

\section{Certificate format}\label{app:cert}
The binary certificate begins with the ASCII header \texttt{DOM15TREE1} followed by a newline.  The remainder is a preorder encoding of a rooted proof tree.  A byte \texttt{0x00} denotes a leaf.  A byte \texttt{0x01} denotes a branch and is followed by one byte \(w\in\{0,\ldots,49\}\) naming the undominated witness.  A branch has exactly four children, in increasing order of the vertices of \(N[w]\).  No candidate list is trusted from the certificate: the checker recomputes \(N[w]\) from the edge list.  This format is deliberately minimal; all mathematical validity conditions are recomputed by the checker.

\section{Exact logical justification of memoization in the direct audit}
The direct audit memoizes a dominated-vertex mask \(D\) together with the least number \(s\) of selected vertices by which that mask has been reached.  If the same mask is later reached using \(s'\geq s\) vertices, the latter state may be discarded.  Future feasibility depends only on which vertices are already dominated: there are no independence, packing, or exclusion constraints on future selected vertices.  Any completion of the more expensive state is therefore also a completion of the cheaper state with no larger total cardinality.  This memoization is used only by the secondary direct search, not by the proof-tree checker.

\section{Release hashes}\label{app:hashes}
The file \texttt{SHA256SUMS} in the ancillary package is authoritative for the exact release.  At manuscript build time, the canonical JSON and raw edge list had digests
\begin{align*}
\texttt{counterexample.json}:\quad&\texttt{502b159cc2a196c50904022bcfbe296c}\ldots,\\
\texttt{counterexample.edgelist}:\quad&\texttt{63363b5e9c8bdd119418024fe80da7b}\ldots,
\end{align*}
and the uncompressed proof tree had digest
\[
\texttt{c10df15cff4f12f70c4937895656308a9080a4b3bf05fcd1c44b8d2e2a830639}.
\]
The full 64-hex-digit digests for every file are recorded in \texttt{SHA256SUMS}.

\end{document}